\newcommand{\Q}{\mathbb{Q}}
\newcommand{\C}{\mathbb{C}}
\newcommand{\Z}{\mathbb{Z}}
\newcommand{\G}{\mathbb{G}}
\newcommand{\M}{\mathcal{M}}
\renewcommand{\k}{{\mathnormal{k}}}
\newcommand{\kk}{\bar{\k}}
\DeclareMathOperator{\Gal}{Gal}
\DeclareMathOperator{\Hom}{Hom}
\DeclareMathOperator{\id}{id}
\DeclareMathOperator{\Spec}{Spec}
\DeclareMathOperator{\Spf}{Spf}
\DeclareMathOperator{\gr}{gr}
\DeclareMathOperator{\Lie}{Lie}
\DeclareMathOperator{\Coim}{Coim}
\DeclareMathOperator{\Ext}{Ext}
\DeclareMathOperator{\Ker}{Ker}
\DeclareMathOperator{\Coker}{Coker}
\renewcommand{\Im}{\mathrm{Im}}
\DeclareMathOperator*{\colim}{colim}
\newcommand{\x}{\times}
\newcommand{\ox}{\otimes}
\newcommand{\iso}{\cong}
\newcommand{\inv}{^{-1}}
\newcommand{\dfni}[1]{\emph{#1}\index{#1}}
\newtheorem{thr}{Theorem}[section]
\newtheorem{lmm}[thr]{Lemma}
\newtheorem{prp}[thr]{Proposition}
\theoremstyle{definition}\newtheorem{dfn}[thr]{Definition}
\theoremstyle{remark}\newtheorem{rmk}[thr]{Remark}
\theoremstyle{remark}
\title{Cohomological Dimension of \\ Laumon 1-motives up to Isogenies}
\author{Nicola Mazzari}
\date{\today}
\begin{document}
\maketitle
	\noindent
	Dipartimento di Matematica ``Federigo Enriques''\\
	Universit\`a degli Studi di Milano\\
	Via  Saldini, 50\\
	20133 - Milano\\
	E-Mail: {\tt mazzari@mat.unimi.it}\\[1cm]
	\begin{abstract}
		We prove that  the category of Laumon 1-motives up isogenies over a field of characteristic zero is of cohomological dimension $\le 1$. As a consequence this implies the same result for the category of formal Hodge structures of level $\le 1$ (over $\Q$).\\
		MSC: 14C99, 14L15.
	\end{abstract}
\tableofcontents
\section*{Introduction}
In  \cite{deligne:hodge3} Deligne defined  1-motive over a field $\k$ as $\Gal(\k^{\rm sep}|\k)$-equivariant morphism $[u:\boldsymbol{X}\to \boldsymbol{G}(\k^{\rm sep})]$ where $\boldsymbol{X}$ is a free $\Gal(\k^{\rm sep}|\k)$-module and $\boldsymbol{G}$ is a semi-abelian algebraic group over $\k$.  They form a category that we shall denote by $\M_{1,\k}$ or $\M_1$. 

Deligne's definition was motivated by Hodge theory. In fact the category of 1-motives over the complex numbers is equivalent, via the so called \emph{Hodge realization} functor, to the category  $\sf MHS_1$ of mixed Hodge structures of level $\le 1$. It is known the the category $\sf MHS_1$ is of cohomological dimension $1$ (see \cite{beilinson:abshodge}) and the same holds for $\M_{1,\C}$. \\
F. Orgogozo proved more generally that for any field $\k$, the category $\M_{1,\k}\ox \Q$  is of cohomological dimension $\le 1$ (see \cite[Prop. 3.2.4]{orgogozo:isomotif}).  

Over a field of characteristic  $0$  it is possible to define the category $\M_{1,\k}^{\rm a}$ of Laumon 1-motives generalizing that of Deligne 1-motives (See \cite{laumon}). In \cite{bv:fht} L.~Barbieri-Viale generalized the Hodge realization functor to Laumon 1-motives. He defined the category $\sf FHS_1$ of formal Hodge structures of level $\le 1$ containing $\sf \sf MHS_1$ and proved that $\sf FHS_1$ is equivalent to the category of Laumon 1-motives over $\C$ (compatibly with the Hodge realization).

In this paper we prove that the category of Laumon 1-motives up to isogenies is of cohomological dimension $1$.

\subsection*{Acknowledgments}
The author would like to thank L. Barbieri-Viale for 
pointing his attention to this subject and  for helpful discussions. The author also thanks A. Bertapelle for many useful comments and suggestions.

	\section{Laumon 1-motives}
	In this paper $\k$ is a field of characteristic $0$ and $\kk$ is its algebraic closure. As explained in the \S \ref{sec:fppf} we can assume that the categories of formal and algebraic groups are full sub-category of $\sf Ab_\k$, i.e. the category of abelian sheaves on the category of affine $\k$-schemes w.r.t. the fppf topology.
	\begin{dfn}
		A \dfni{Laumon 1-motive} over $\k$ (or an effective free 1-motive over $\k$, cf. \cite[1.4.1]{bv.bertapelle:sharpderham}) is the data of

		i) A (commutative) formal group $\boldsymbol{F}$ over $\k$, such that $\Lie \boldsymbol{F}$ is a finitely generated and $\boldsymbol{F}(\kk)=\lim_{[\k':\k]<\infty}\boldsymbol{F}(\k')$ is finitely generated and torsion-free $\Gal(\kk/\k)$-module. 

		ii) A connected commutative algebraic group scheme $\boldsymbol{G}$ over $\k$.

		iii) A morphism $u:\boldsymbol{F}\to \boldsymbol{G}$ in the category $\sf Ab_\k$.
	\end{dfn}
	Note that we can consider a Laumon 1-motive (over $\k$) $M=[u:\boldsymbol{F}\to \boldsymbol{G}]$ as a complex of sheaves in $\sf Ab_\k$ concentrated in degree $0,1$. \\
	It is known that any formal $\k$-group $\boldsymbol{F}$ splits canonically as product $\boldsymbol{F}^o \times \boldsymbol{F}_{\rm et} $ where $\boldsymbol{F}^o$ is the identity component of $\boldsymbol{F}$ and is a connected formal $\k$-group, and $\boldsymbol{F}_{\rm et} = \boldsymbol{F} /\boldsymbol{F}^o$ is \'etale. Moreover, $\boldsymbol{F}_{\rm et}$ admits a maximal sub-group scheme $\boldsymbol{F}_{\rm tor}$ , \'etale and finite, such that the quotient $\boldsymbol{F}_{\rm et} /\boldsymbol{F}_{\rm tor} = \boldsymbol{F}_{\rm fr}$ is constant of the type $\Z^r$ over $\kk$. One says that $\boldsymbol{F}$ is torsion-free if $\boldsymbol{F}_{\rm tor} = 0$. 

	By a theorem of Chevalley any connected algebraic group scheme $\boldsymbol{G}$ is extension of an abelian variety $\boldsymbol{A}$ by a linear $\k$-group scheme $\boldsymbol{L}$ that is product of its maximal sub-torus $\boldsymbol{T}$ with a vector $\k$-group scheme $\boldsymbol{V}$. (See \cite{MR0344261} for more details  on algebraic and formal groups)
	\begin{dfn}
		A \dfni{morphism} of Laumon 1-motives is a commutative square in the category $\sf Ab_\k$. We denote by $\M^{\rm a}_1=\M^{\rm a}_{1,\k}$ the category of Laumon $\k$-1-motives, i.e. the full sub-category of $C^b(\sf Ab_\k)$ whose objects are Laumon 1-motives.
	\end{dfn}
\begin{rmk}
	The category of Deligne 1-motives (over $\k$) is the full sub-category $\M_1$ of $\M_1^{\rm a}$ whose objects are $M=[u:\boldsymbol{F}\to \boldsymbol{G}]$ such that $\boldsymbol{F}^o=0$ and $\boldsymbol{G}$  is semi-abelian (cf. \cite[\S 10.1.2]{deligne:hodge3}).
\end{rmk}	
\begin{prp}
	The category $\M_1^{\rm a}$ of Laumon 1-motives (over $\k$) is an additive category with kernels and co-kernels.
\end{prp}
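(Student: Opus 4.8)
The plan is to exploit that $\sf Ab_\k$ is abelian, so that $C^b(\sf Ab_\k)$ is abelian — in particular additive — and that $\M^{\rm a}_1$ is by definition a \emph{full} subcategory of it. Additivity of $\M^{\rm a}_1$ then reduces to the remarks that the Hom-sets are the abelian groups inherited from $C^b(\sf Ab_\k)$, that the zero complex $[0\to 0]$ is a Laumon 1-motive, and that $\M^{\rm a}_1$ is closed under finite direct sums: for $M=[u:\boldsymbol{F}\to\boldsymbol{G}]$ and $M'=[u':\boldsymbol{F}'\to\boldsymbol{G}']$ one has $M\oplus M'=[u\oplus u':\boldsymbol{F}\oplus\boldsymbol{F}'\to\boldsymbol{G}\oplus\boldsymbol{G}']$, where $\boldsymbol{F}\oplus\boldsymbol{F}'$ has finitely generated $\Lie$ and finitely generated torsion-free $\kk$-points, and $\boldsymbol{G}\oplus\boldsymbol{G}'$ is a connected commutative algebraic $\k$-group. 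This part is routine.

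For kernels, given $f=(f_{-1},f_0):M\to M'$, I would first form the kernel $[\Ker f_{-1}\to\Ker f_0]$ in $C^b(\sf Ab_\k)$ and then correct it so that it lands in $\M^{\rm a}_1$. Here $\Ker f_{-1}$ is a sub-formal-group of $\boldsymbol{F}$, hence again of the required type (its $\Lie$ and its $\kk$-points are subobjects of finitely generated, resp.\ finitely generated torsion-free, ones), but $\Ker f_0$ is only a closed subgroup scheme of the algebraic group $\boldsymbol{G}$ and may be disconnected. Since the component group $\pi_0(\Ker f_0)$ is finite \'etale, the fix is to put $\boldsymbol{G}_K:=(\Ker f_0)^o$ and $\boldsymbol{F}_K:=\Ker\bigl(\Ker f_{-1}\to\Ker f_0\to\pi_0(\Ker f_0)\bigr)$; then $\boldsymbol{F}_K$ is still of the required type and $u$ sends it into $\boldsymbol{G}_K$, so $K:=[\boldsymbol{F}_K\to\boldsymbol{G}_K]$ is a Laumon 1-motive, and I claim it is the kernel of $f$ in $\M^{\rm a}_1$. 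To check the universal property: given $N=[v:\boldsymbol{H}\to\boldsymbol{J}]$ and $g:N\to M$ with $f\circ g=0$, the morphism $g_0$ factors through $\Ker f_0$, and since $\boldsymbol{J}$ is connected its image lies in $(\Ker f_0)^o=\boldsymbol{G}_K$; likewise $g_{-1}$ factors through $\Ker f_{-1}$, and commutativity of the defining square forces the composite $\boldsymbol{H}\to\Ker f_{-1}\to\pi_0(\Ker f_0)$ to be zero, so $g_{-1}$ factors through $\boldsymbol{F}_K$; the lift is unique because $\boldsymbol{F}_K\to\boldsymbol{F}$ and $\boldsymbol{G}_K\to\boldsymbol{G}$ are monomorphisms.

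Cokernels I would treat dually. Starting from the componentwise cokernel $[\boldsymbol{F}'/\im f_{-1}\to\boldsymbol{G}'/\im f_0]$ in $C^b(\sf Ab_\k)$, write $Q_{-1}:=\boldsymbol{F}'/\im f_{-1}$, $Q_0:=\boldsymbol{G}'/\im f_0$, with induced $\bar u':Q_{-1}\to Q_0$. Now $Q_0$, as a quotient of the connected group $\boldsymbol{G}'$, is again a connected commutative algebraic $\k$-group, and $\Lie Q_{-1}$ is finitely generated, but $Q_{-1}(\kk)$ — a quotient of a finitely generated torsion-free module — may have torsion. The fix, symmetric to the previous one, is to set $T:=(Q_{-1})_{\rm tor}$ (finite \'etale), $\boldsymbol{F}_C:=Q_{-1}/T$, which now has finitely generated torsion-free $\kk$-points and unchanged $\Lie$, and $\boldsymbol{G}_C:=Q_0/\bar u'(T)$, which is still connected since $T$ is finite; then $\bar u'$ descends to $\boldsymbol{F}_C\to\boldsymbol{G}_C$ and $C:=[\boldsymbol{F}_C\to\boldsymbol{G}_C]$ is a Laumon 1-motive. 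The universal property is checked as before: a morphism $g:M'\to N$ with $g\circ f=0$ factors through $[Q_{-1}\to Q_0]$, the induced map $Q_{-1}\to\boldsymbol{H}$ kills $T$ because $\boldsymbol{H}(\kk)$ is torsion-free and an \'etale $\kk$-group with trivial $\kk$-points is trivial, whence the square forces the induced map $Q_0\to\boldsymbol{J}$ to kill $\bar u'(T)$, giving a lift $C\to N$, unique since $M'\to C$ is a componentwise epimorphism.

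The universal-property verifications are formal; the real work — and the reason $\mathrm{char}\,\k=0$ matters — is the structural bookkeeping that keeps every intermediate object inside $\M^{\rm a}_1$: that the component group of a commutative algebraic $\k$-group is finite \'etale, that quotients of connected algebraic groups (resp.\ of formal groups) by closed subgroups are again connected algebraic groups (resp.\ formal groups) with the expected $\Lie$, and that images of morphisms of commutative algebraic and of commutative formal $\k$-groups are representable by closed subobjects, so that — as recalled in the references on algebraic and formal groups cited above — the relevant categories of groups, seen as full subcategories of $\sf Ab_\k$, are abelian. I expect this bookkeeping, rather than any single deep point, to be the main obstacle.
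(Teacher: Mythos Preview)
The paper does not give an argument here; its proof is the single line ``See \cite[Prop.~5.1.3]{laumon}.'' Your direct construction is therefore strictly more than what the paper offers, and it is correct. Your kernel coincides with the formula the paper records in the Remark following the Proposition, $\Ker(f,g)=[u^*\Ker(g)^o\to \Ker(g)^o]$.

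For the cokernel there is an interesting discrepancy worth noting. The paper's Remark states $\Coker(f,g)=[\Coker(f)_{\rm fr}\to \Coker(g)]$, whereas you further quotient the right-hand term by $\bar u'(T)$. Your version is the correct one in $\M_1^{\rm a}$: in general the induced map $\bar u':\Coker(f)\to\Coker(g)$ does \emph{not} annihilate $T=\Coker(f)_{\rm tor}$, so there is no canonical arrow $\Coker(f)_{\rm fr}\to\Coker(g)$ and no morphism $M'\to[\Coker(f)_{\rm fr}\to\Coker(g)]$ to test universality against. A small example: $M=[\Z\xrightarrow{0}0]$, $M'=[\Z\xrightarrow{u'}E]$ with $E$ an elliptic curve and $u'(1)=P$ a nonzero $2$-torsion point, $(f,g)=(2,0)$; then $\Coker(f)=\Z/2$, $\Coker(g)=E$, and $\bar u'(\bar 1)=P\neq 0$, so the honest cokernel is your $[0\to E/\langle P\rangle]$. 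The two candidates differ only by an isogeny, so the paper's formula becomes correct in $\M_1^{\rm a}\otimes\Q$, which is all that is used later; but at the level of $\M_1^{\rm a}$ your extra quotient is needed.
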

\begin{proof}
	See \cite[Prop. 5.1.3]{laumon}.
\end{proof}
\begin{rmk}
	i)  Let 
	\begin{equation*}
	\xymatrix{
	\boldsymbol{F} \ar[d]_{u}\ar[r]^{f}& \boldsymbol{F}'   \ar[d]^{u'}\\
\boldsymbol{G}	\ar[r]_{g} & \boldsymbol{G}' }
	\end{equation*}
	be a morphism from $M=[u:\boldsymbol{F}\to \boldsymbol{G}]$ to $M'=[u':\boldsymbol{F}'\to \boldsymbol{G}']$. Then from previous proof we get
	\begin{equation}\label{eq:ker1mot}
		\Ker(f,g)=[u^*\Ker(g)^o\to \Ker(g)^o]
	\end{equation}
	where $u:\Ker(f)\to \Ker(g)$, and
	\begin{equation}\label{eq:coker1mot}
		\Coker(f,g)=[\Coker(f)_{\rm fr}\to \Coker (g)]
	\end{equation}
	
ii) The category of Laumon 1-motives is not abelian. In fact consider a surjective morphism of connected algebraic groups $g:\boldsymbol{G}\to \boldsymbol{G}'$. Then $\Ker(g)$ is not necessarily connected. Hence in the category of connected algebraic groups the canonical map 
\[
	\Coim(g)=\boldsymbol{G}/\Ker(g)^o\rightarrow \Im(g)=\boldsymbol{G}'
\]
is not an isomorphism in general.

Note that the category of connected algebraic groups is fully embedded in $\M_1^{\rm a}$.
\end{rmk}
According to \cite{orgogozo:isomotif} we  define the  category ${\cal M}^{\rm a}_1\ox\Q$ of  Laumon 1-motives up to isogenies: the objects are the same of ${\cal M}^{\rm a}_1$; the Hom groups are $\Hom_{{\cal M}^{\rm a}_1}(M,M')\ox_\Z \Q $.
\begin{rmk}
	Note that a morphism $(f,g):M\to M'$ is an isogeny (i.e. an isomorphism in ${\cal M}^{\rm a}_1\ox \Q$ ) if and only if $f$ is injective with finite co-kernel and $g$ is surjective with finite kernel.
\end{rmk}
\begin{prp}
	The category of Laumon 1-motives up to isogenies is  abelian. 
\end{prp}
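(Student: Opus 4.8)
The plan is to apply the standard characterization of abelian categories among additive ones: an additive category possessing all kernels and cokernels is abelian if and only if, for every morphism $h$, the canonical morphism $\overline{h}\colon\Coim(h)\to\Im(h)$ induced by $h$ is an isomorphism. The proof then breaks into three parts: (a) $\M_1^{\rm a}\ox\Q$ is additive and inherits kernels and cokernels from $\M_1^{\rm a}$; (b) it is enough to check the criterion for morphisms in the image of $\M_1^{\rm a}$; (c) for such a morphism $\overline{h}$ is an isogeny, hence becomes invertible in $\M_1^{\rm a}\ox\Q$.

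For (a): additivity is clear, since $(-)\ox_\Z\Q$ is exact, so finite biproducts and the zero object $[0\to0]$ of $\M_1^{\rm a}$ remain finite biproducts and a zero object after tensoring all Hom-groups with $\Q$. For kernels I would use the elementary fact that a morphism of $\M_1^{\rm a}$ becomes zero in $\M_1^{\rm a}\ox\Q$ exactly when some positive integer multiple of it already vanishes in $\M_1^{\rm a}$. Given $h\in\Hom_{\M_1^{\rm a}}(M,M')$, this shows that the monomorphism $\Ker_{\M_1^{\rm a}}(h)\to M$ (whose existence is the content of the Proposition above together with \eqref{eq:ker1mot}) is still a kernel of $h$ in $\M_1^{\rm a}\ox\Q$: if $\phi\ox\tfrac1m\colon T\to M$ satisfies $(h\ox1)\circ(\phi\ox\tfrac1m)=0$, then $h\circ(r\phi)=0$ in $\M_1^{\rm a}$ for some $r>0$, the resulting unique factorization of $r\phi$ through $\Ker_{\M_1^{\rm a}}(h)$ may be divided by $r$, and it is unique in $\M_1^{\rm a}\ox\Q$ because $\Ker_{\M_1^{\rm a}}(h)\to M$ remains a monomorphism after $\ox\Q$ (clear denominators). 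Cokernels are treated dually using \eqref{eq:coker1mot}. Thus $\Coim$, $\Im$ and $\overline{h}$ in $\M_1^{\rm a}\ox\Q$ are computed from the corresponding data in $\M_1^{\rm a}$.

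For (b) and (c): an arbitrary morphism of $\M_1^{\rm a}\ox\Q$ has the form $h\ox\tfrac1n$ with $h=(f,g)$ a morphism of $\M_1^{\rm a}$, and $\overline{h\ox\tfrac1n}$ differs from $\overline{h}$ by the invertible scalar $\tfrac1n$, so one is an isomorphism iff the other is; thus one may assume $h$ comes from $\M_1^{\rm a}$. By \eqref{eq:ker1mot}, $\Coim(h)=M/\Ker(f,g)$ with $\Ker(f,g)=[u^*\Ker(g)^o\to\Ker(g)^o]$, while $\Im(h)=\Ker(M'\to\Coker(f,g))$ is obtained by feeding \eqref{eq:coker1mot} back into \eqref{eq:ker1mot}. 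Comparing the two descriptions, the failure of $\overline{h}$ to be an isomorphism is concentrated in exactly the two phenomena underlying the non-abelianness of $\M_1^{\rm a}$ noted in the Remark above: on the algebraic-group side, the difference between $\Ker(g)$ and its identity component $\Ker(g)^o$, the quotient $\pi_0(\Ker(g))$ being finite \'etale in characteristic $0$; on the formal-group side, the difference between a cokernel of formal groups and its torsion-free quotient $(-)_{\rm fr}$, which is finite because of the finiteness hypotheses built into the definition of a Laumon 1-motive. Hence the formal-group component of $\overline{h}$ is injective with finite cokernel and its algebraic-group component is surjective with finite kernel, so by the characterization of isogenies recalled above $\overline{h}$ is an isogeny, i.e. an isomorphism in $\M_1^{\rm a}\ox\Q$; therefore $\M_1^{\rm a}\ox\Q$ is abelian by the criterion of the first paragraph.

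The hard part is the bookkeeping in (c): one must follow carefully how ``passing to the identity component'' in \eqref{eq:ker1mot} interacts with ``passing to the torsion-free quotient'' in \eqref{eq:coker1mot} when forming $\Im(h)=\Ker(M'\to\Coker(f,g))$, and check that after these operations the kernel and the cokernel of $\overline{h}$ are finite in both the formal-group and the algebraic-group component --- in particular, that no positive-dimensional subgroup can survive as a discrepancy. I expect the cleanest way to organize this verification is to reduce first, via the Chevalley decomposition of $\boldsymbol{G}$ and the connected--\'etale decomposition of $\boldsymbol{F}$ recalled above, to morphisms between the elementary building blocks (lattices, tori, abelian varieties, vector groups, and connected formal groups), where the comparison map $\Coim(h)\to\Im(h)$ can be written down by hand.
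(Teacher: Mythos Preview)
Your overall plan coincides with the paper's: verify that $\M_1^{\rm a}\ox\Q$ is additive with kernels and cokernels inherited from $\M_1^{\rm a}$, then show that for any $(f,g)$ the canonical map $\Coim(f,g)\to\Im(f,g)$ is an isogeny, the obstruction being governed by the finite group $\pi_0(\Ker g)$ and by torsion in the formal-group component. Parts (a) and (b) are fine and match the paper (which dispatches them in one line, ``by construction'').

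The gap is that you do not actually carry out (c): you end by announcing a reduction, via the Chevalley and connected--\'etale decompositions, to the elementary building blocks. That reduction is unnecessary and would be awkward, since a general morphism of 1-motives does not decompose as a direct sum of morphisms between the pure pieces. The paper avoids this entirely with one short observation. Starting from the description $\Ker(f,g)=[u^*\Ker(g)^o\to\Ker(g)^o]$ of \eqref{eq:ker1mot}, note that the restricted map $u\colon\Ker f\to\Ker g$ lands in $\Ker(g)^o$ after multiplication by the exponent $n$ of the finite group $\pi_0(\Ker g)$; hence $[u^*\Ker(g)^o\to\Ker(g)^o]$ is isogenous to the ``naive'' kernel $[\Ker f\to\Ker(g)^o]$. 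This immediately gives $\Coim(f,g)$ isogenous to $[(\boldsymbol{F}/\Ker f)_{\rm fr}\to\boldsymbol{G}/\Ker g]$ (using that $\boldsymbol{G}/\Ker(g)^o\to\boldsymbol{G}/\Ker g$ is an isogeny), which is visibly isogenous to $\Im(f,g)$. No case analysis on building blocks is needed; the single ingredient is that $n\cdot u$ factors through $\Ker(g)^o$.
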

\begin{proof}
	By construction ${\cal M}^{\rm a}_1\ox \Q$ is an additive category. Let $(f,g):M\to M'$ be a morphism of Laumon 1-motives. 
	We know that  the group $\pi_0(\Ker(g))=\Ker(g)/\Ker(g)^o$ is a finite group scheme, hence there exists an integer $n$ such that the following diagram commutes in $\sf Ab_\k$
	\begin{equation*}
	\xymatrix{
	& \Ker (f)\ar[d]^{n\cdot u}\ar[dr]^0\\
	\Ker(g)^o\ar[r]&\Ker(g)\ar[r]&\pi_0(\Ker(g))  }
	\end{equation*}
	Hence $n\cdot u $ factors through $\Ker(g)^o$. Then it is  easy to check that  $\Ker((f,g))=[(u^*\Ker(g)^o)\to \Ker(g)^o]$ is isogenous to $[\Ker(f)\to \Ker(g)^0]$. 
	
 It follows that $\Coim(f,g)$ is isogenous to $[(\boldsymbol{F}/\Ker(f))_{\rm fr}\to \boldsymbol{G}/\Ker(g)]$. As $G/\Ker(g)^o\to G/\Ker(g)$ is an isogeny we get that the canonical map $\Coim(f,g)\to \Im(f,g)$ an isogeny too.

This is enough to prove that the category 	${\cal M}^{\rm a}_1\ox \Q$ is abelian.
\end{proof}
\begin{rmk}
	We can define the category ${}^t\M_1^{\rm a}$ of \emph{1-motives with torsion} (over $\k$) as the full sub-category of $D^b(\sf Ab_\k)$ with objects complexes $[u:\boldsymbol{F}\to \boldsymbol{G}]$ concentrated in degree $0,1$ such that

		i) A (commutative) formal group $\boldsymbol{F}$ over $\k$, such that $\Lie \boldsymbol{F}$ is a finitely generated and $\boldsymbol{F}(\kk)=\lim_{[\k':\k]<\infty}\boldsymbol{F}(\k')$ is finitely generated (non necessarily torsion-free!) $\Gal(\kk|\k)$-module. 

		ii) A connected commutative algebraic group scheme $\boldsymbol{G}$ over $\k$.

		iii) A morphism $u:\boldsymbol{F}\to \boldsymbol{G}$ in the category $\sf Ab_\k$.
		
		Also we denote by ${}^t\M_1\subset {}^t\M_1^{\rm a}$ the full sub-category whose object are of the form $[u:\boldsymbol{F}\to \boldsymbol{G}]$ with $\boldsymbol{F}^o=0$ and $\boldsymbol{G}$ semi-abelian.
		
		In \cite[C.7.3]{bv-kahn:D1mot1} is proven that the canonical functor $\M_1\to {}^t\M_1$ induces an equivalence of the same categories up to isogeny, i.e. $\M_1\ox \Q\iso {}^t\M_1\ox \Q$. The same result holds for Laumon 1-motives, in fact all the arguments given in \cite{bv-kahn:D1mot1} work also in this setting. Hence there is an equivalence of categories 
		\[
			\M_1^{\rm a}\ox \Q \stackrel{\sim}{\longrightarrow} {}^t\M_1^{\rm a}\ox \Q\ .
		\] 
\end{rmk}
A Deligne 1-motive is endowed with an increasing filtration (of sub-1-motives) called the weight filtration (\cite[\S 10.1.4]{deligne:hodge3}) defined as follows
\[
	W_i=W_iM:=\begin{cases}
		[u:\boldsymbol{X}\to \boldsymbol{G}]& i\ge 0\\
		[0\to \boldsymbol{G}]& i=-1\\
		[0\to \boldsymbol{T}]& i=-2\\
		[0\to 0]&i\le -3
	\end{cases}
\]
hence we get 
\[
	\gr_i^WM=\begin{cases}
		[\boldsymbol{X}\to 0]&i= 0\\
		[0\to \boldsymbol{A}]& i=-1\\
		[0\to \boldsymbol{T}]& i=-2\\
		[0\to 0]&\text{otherwise}
	\end{cases}
\]

	According to \cite[C.11.1]{bv-kahn:D1mot1} we extend the weight filtration to   Laumon 1-motives. Let $M=[u:\boldsymbol{F}\to \boldsymbol{G}]$   be an Laumon 1-motive, then
	\[
		W_{-3}=0\ \subset W_{-2}=[0\to \boldsymbol{L}]\ \subset W_{-1}=[0\to \boldsymbol{G}]\ \subset W_{0}=M\ .
	\]
	\subsection{fppf sheaves}\label{sec:fppf}
	Let $\sf Sch_\k$ be the category of schemes over $\k$ and $\sf Aff_\k$ be the full sub-category of affine schemes. According to \cite[Exp. IV \S 6.3]{sga3.1} the fppf topology on $\sf Sch_\k$ is the one generated by: the families of jointly surjective open immersions in $\sf Sch_\k$; the finite families of jointly surjective, flat, of finite presentation and quasi-finite morphisms in $\sf Aff_\k$.

	Recall that $\sf Ab_\k$ is the category of abelian sheaves on $\sf Aff_\k$ w.r.t. the fppf topology.

	\begin{prp}\label{prp:fppfemb}
		i) The category of commutative group schemes over $\k$ is a full sub-category of $\sf Ab_\k$ via the functor of points $\boldsymbol{G}\mapsto h_{\boldsymbol{G}}:=\Hom_{\sf sch_\k}(-,\boldsymbol{G})$.

		ii) Let $\rm char(\k)=0$. The category of formal group schemes  is a full sub-category  of $\sf Ab_\k$ via the functor of points $\boldsymbol{F}=\Spf(A)\mapsto h_{\boldsymbol{F}}:=\Hom_{\sf alg_\k}^{\rm cont}(A,-)$ (where $\Hom_{\sf alg_\k}^{\rm cont}(-,-)$ denote the set of continuous homomorphisms of $\k$-algebras).
	\end{prp}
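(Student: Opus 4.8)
The plan is to deduce both statements from the Yoneda lemma and faithfully flat descent; the only genuinely delicate points are the passage from the big site $\sf Sch_\k$ to the affine site $\sf Aff_\k$ and, in part ii), the presentation of a formal spectrum as a filtered colimit of representable functors, after which everything is formal.

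\emph{Part i).} First I would recall that for every $\k$-scheme $Y$ the presheaf $h_Y=\Hom_{\sf Sch_\k}(-,Y)$ is an fppf sheaf on $\sf Sch_\k$: this is faithfully flat descent for morphisms of schemes (\cite[Exp.\ IV]{sga3.1}). Next, restriction along $\sf Aff_\k\hookrightarrow\sf Sch_\k$ is an equivalence onto sheaves on the fppf site of $\sf Aff_\k$, by the comparison lemma: every $\k$-scheme has a Zariski (hence fppf) cover by affines, and every fppf cover of an affine scheme refines to a finite affine cover, so $\sf Aff_\k$ is a generating subsite. Hence $h_{\boldsymbol G}|_{\sf Aff_\k}$ is a sheaf, and it is abelian-group valued precisely because $\boldsymbol G$ is a commutative group object. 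For full faithfulness I would use that $\boldsymbol G\mapsto h_{\boldsymbol G}$ is fully faithful already on $\sf Sch_\k$ — a morphism of schemes is determined by and glued from its restrictions to an affine open cover, i.e.\ $h_Y$ is a colimit of representables $h_U$ with $U$ affine in the category of fppf sheaves — and transport this across the site equivalence. Finally, the functor of points being fully faithful and finite-product preserving, it induces a fully faithful embedding on group objects, which is the assertion.

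\emph{Part ii).} Write $\boldsymbol F=\Spf(A)$ with $A=\lim_I A/I$ the inverse limit over the open ideals $I$, each $A/I$ a \emph{discrete} $\k$-algebra. Since $\mathrm{char}(\k)=0$ every formal $\k$-group is built by extensions from copies of $\widehat{\G}_a=\Spf\k[[t]]$ and from étale formal groups (Cartier's theorem: connected formal $\k$-groups are smooth), so the $A/I$ may be taken pro-(finite type) over $\k$; but all we really use is that a continuous homomorphism from $A$ to a discrete ring $R$ factors through some $A/I$. This gives $h_{\boldsymbol F}=\colim_I h_{\Spec(A/I)}$ as presheaves on $\sf Aff_\k$, a \emph{filtered} colimit of fppf sheaves by part i). Because the fppf topology on $\sf Aff_\k$ has a basis of finite coverings — finite families of flat quasi-finite maps together with finite affine open covers, affine schemes being quasi-compact — and the sheaf axiom for a finite cover is a finite limit, filtered colimits of sheaves are again sheaves; hence $h_{\boldsymbol F}\in\sf Ab_\k$, with the group structure inherited from $\boldsymbol F$.

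It remains to check full faithfulness. For $\boldsymbol F=\Spf A$ and $\boldsymbol F'=\Spf A'$, using the colimit presentation and Yoneda on $\sf Aff_\k$ I would compute
\[
\Hom_{\sf Ab_\k}(h_{\boldsymbol F},h_{\boldsymbol F'})=\lim_I\Hom_{\sf Ab_\k}(h_{\Spec(A/I)},h_{\boldsymbol F'})=\lim_I h_{\boldsymbol F'}(\Spec(A/I))=\lim_I\Hom^{\rm cont}_{\sf alg_\k}(A',A/I),
\]
while a morphism of formal groups $\boldsymbol F\to\boldsymbol F'$ is a continuous $\k$-algebra map $A'\to A$, and since $A$ carries the inverse-limit topology this set is again $\lim_I\Hom^{\rm cont}_{\sf alg_\k}(A',A/I)$; the two identifications are compatible with composition and with the group laws, so we get the bijection. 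The step I expect to be the main obstacle is exactly this last bookkeeping — making sure the colimit/limit manipulations respect the linear topologies and that ``morphism of commutative formal $\k$-groups'' really coincides with ``continuous homomorphism of the topological coordinate algebras'' in the full generality allowed by the definition of a Laumon $1$-motive, in particular for $A$ an infinite product of power series rings over finite extensions of $\k$, as occurs for the étale part $\boldsymbol F_{\rm fr}\simeq\Z^r$. Granting the standard description of affine formal schemes and their morphisms, the rest is descent plus Yoneda.
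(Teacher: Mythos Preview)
Your argument is correct and in fact more complete than the paper's own proof. For part (i) the paper does essentially what you do---cite Grothendieck's descent result that every scheme represents an fppf sheaf, then restrict to $\sf Aff_\k$---but for part (ii) it proceeds differently: rather than treating a general $\Spf(A)$ as a filtered colimit $\colim_I\Spec(A/I)$, the paper invokes the structure theory of formal groups over a perfect field, splits $\boldsymbol F=\boldsymbol F^o\times\boldsymbol F_{\rm et}$, handles the \'etale piece via (i), and for the connected piece simply observes that $\widehat{\G}_a\cong\colim_n\Spec(\k[t]/(t^{n+1}))$ is a filtered colimit of affine schemes (tacitly using that in characteristic zero every connected formal group is a power of $\widehat{\G}_a$). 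The paper neither explains why a filtered colimit of fppf sheaves on $\sf Aff_\k$ is again a sheaf, nor addresses full faithfulness for formal groups at all; you supply both, the first via the finiteness of fppf covers of affines and the second via your $\lim/\colim$ computation. Your route is more uniform and self-contained, at the price of the topological bookkeeping you flag; the paper's route is shorter but leans on Demazure's classification and leaves full faithfulness to the reader.
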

	\begin{proof}
		By a result of Grothendieck (\cite[Part I, \S 2.3.6]{fgaexplained}) every scheme (over $\k$) is a sheaf (on sets) w.r.t. the fppf topology on $\sf Sch_\k$. Hence it is also a fppf-sheaf on the sub-category $\sf Aff_\k\subset \sf Sch_\k$. From this fact (i) and (ii) follow for \'etale formal groups. 

	By the decomposition theorem for formal groups over a perfect field \cite{MR0344261} it remains to prove that any connected (or local) formal group is a sheaf. It is sufficient to note that 
	\[
		\widehat{\G}_a\iso\colim_n \Spec (\k[t]/(t^{n+1}))
	\]
	is a direct limit of affine schemes, hence a direct limit of sheaves of sets w.r.t. the fppf topology.
	\end{proof}
\section{Extensions}
\subsection{The group of $n$-extensions}

Let $\mathsf{A}$ be any abelian category (we don't suppose it has enough injective objects), then we can define its derived category $D(\mathsf{A})$ and the group of n-fold extension classes
$$
\Ext^{n}_{\mathsf{A}}(A,B):=\Hom_{D(\mathsf{A})}(A,B[n])\qquad A,B\in \mathsf{A}\ .
$$
As usual we identify this group with the group of classes of \emph{Yoneda extensions}, i.e. the set of exact sequences
$$
0\to B\to E_{1}\to\cdots \to E_n\to A\to 0
$$
modulo congruences (See \cite{iversen} or \cite{gelfand-manin}).
\subsubsection{A lemma on 2-fold extensions}

Consider a 2-fold extension $\gamma\in \Ext^2_{\sf A}(M,M')$.  It is represented by an exact sequence
\begin{equation}\label{eq:2ext}
		0\to M'\to E_1\to E_2 \to M\to 0\ .
\end{equation}
This can be written as the product of two 1-fold extensions as follows. Let $E:=\Ker (E_2\to M)=\Coker(M'\to E_1)$, then let $\gamma_1\in \Ext^1_{\sf A}(E,M')$, $\gamma_2\in \Ext^1_{\sf A}(M,E)$ be the classes represented by
\begin{equation}\label{eq:spliced}
		0\to M'\to E_1\to E\to 0\ ,\quad 0\to E\to E_2\to M\to 0
\end{equation}
respectively. Then $\gamma=\gamma_1\cdot \gamma_2$.

As a particular case, consider $W_{-2}\subset W_{-1}\subset W_{0}$ a sequence of objects of $\sf A$. We have the following exact sequences
\[
\gamma:\quad 0\to W_{-2} \to W_{-1} \to W_{0}/W_{-2}\to W_{0}/W_{-1}\to 0	
\]
\[
	\gamma_1:\qquad  0\to W_{-2} \to W_{-1} \to W_{-1}/W_{-2}\to 0 
\]
\[
	\gamma_2:\qquad  0\to W_{-1}/W_{-2} \to W_{0}/W_{-2}\to W_{0}/W_{-1}\to 0
\]
and  $\gamma=\gamma_1\cdot \gamma_2\in \Ext^2_{\sf A}(W_{0}/W_{-1},W_{-2})$. In this particular case we get
\begin{lmm}\label{lmm:ext}
	$\gamma=0$ in $\Ext^2_{\sf A}(W_{0}/W_{-1},W_{-2})$.
\end{lmm}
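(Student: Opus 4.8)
The statement is purely formal: it holds in any abelian category $\sf A$, and the only inputs are the interpretation of the connecting map of a long exact $\Ext$-sequence as Yoneda multiplication together with one elementary pushout computation. So the plan is not to use anything special about Laumon 1-motives. Write $B:=W_{0}/W_{-1}$ for brevity. The idea is to exhibit $\gamma$ as the image of $\gamma_{2}$ under a connecting homomorphism, and then to observe that $\gamma_{2}$ lifts, so that this image is $0$.

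Concretely, I would start from the short exact sequence $\gamma_{1}\colon 0\to W_{-2}\to W_{-1}\xrightarrow{\ p\ } W_{-1}/W_{-2}\to 0$ and apply $\Ext^{\bullet}_{\sf A}(B,-)$ to get the exact piece
\[
\Ext^{1}_{\sf A}(B,W_{-1})\xrightarrow{\ p_{*}\ }\Ext^{1}_{\sf A}(B,W_{-1}/W_{-2})\xrightarrow{\ \delta\ }\Ext^{2}_{\sf A}(B,W_{-2})\ .
\]
By the standard description of the boundary map of a $\delta$-functor, $\delta$ is Yoneda multiplication by the class $\gamma_{1}$, so $\delta(\xi)=\gamma_{1}\cdot\xi$ for every $\xi\in\Ext^{1}_{\sf A}(B,W_{-1}/W_{-2})$; in particular $\delta(\gamma_{2})=\gamma_{1}\cdot\gamma_{2}=\gamma$. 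Hence it suffices to prove that $\gamma_{2}$ lies in the image of $p_{*}$.

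For this I would take the class $\xi_{0}\in\Ext^{1}_{\sf A}(B,W_{-1})$ of the canonical extension $0\to W_{-1}\to W_{0}\to B\to 0$ as the candidate lift, and check that its pushout along $p\colon W_{-1}\twoheadrightarrow W_{-1}/W_{-2}$ is precisely $0\to W_{-1}/W_{-2}\to W_{0}/W_{-2}\to B\to 0$, i.e. $p_{*}(\xi_{0})=\gamma_{2}$. This is the only computational point: one identifies the pushout $(W_{-1}/W_{-2})\oplus_{W_{-1}}W_{0}$ with $W_{0}/W_{-2}$ by a routine diagram chase. Granting it, exactness gives $\gamma=\delta(\gamma_{2})=\delta(p_{*}\xi_{0})=0$. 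I do not expect a genuine obstacle here; the closest thing to one is keeping the sign/ordering conventions in the Yoneda product consistent with the paper's, and the pushout bookkeeping, both of which are mechanical. As an alternative packaging of the same argument one can work directly in $D(\sf A)$: here $\gamma=\gamma_{1}[1]\circ\gamma_{2}$, the morphism of distinguished triangles induced by $p$ forces $\gamma_{2}=p[1]\circ\xi_{0}$, and $\gamma_{1}\circ p=0$ because these are two consecutive arrows of the triangle $W_{-2}\to W_{-1}\xrightarrow{p}W_{-1}/W_{-2}\xrightarrow{\gamma_{1}}W_{-2}[1]$, whence $\gamma=0$.
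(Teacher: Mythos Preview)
Your argument is correct. The connecting map in the long exact sequence of $\Ext^{\bullet}_{\sf A}(B,-)$ associated to $\gamma_{1}$ is indeed Yoneda product with $\gamma_{1}$, and the identification of the pushout of $0\to W_{-1}\to W_{0}\to B\to 0$ along $p\colon W_{-1}\twoheadrightarrow W_{-1}/W_{-2}$ with $0\to W_{-1}/W_{-2}\to W_{0}/W_{-2}\to B\to 0$ is immediate (the pushout kills exactly the image of $W_{-2}=\Ker p$ inside $W_{0}$). The derived-category variant you sketch is equally valid.

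As for comparison with the paper: there is essentially nothing to compare. The paper does not prove this lemma but simply refers the reader to \cite[Lemma 3.2.5]{orgogozo:isomotif} and \cite[p.~184]{gelfand-manin}. Your argument is the standard one that appears in those sources, so you have supplied exactly what the paper outsources.
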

\begin{proof}
	See \cite[Lemma 3.2.5]{orgogozo:isomotif}, or \cite[p. 184]{gelfand-manin}.
\end{proof}
\subsection{Ext of 1-motives up to isogenies}\label{sec:ext1isomot}
From now on we call 1-motive a Laumon 1-motive (over $\k$) and $\Ext_\Q^i(M,M')$ is the group of classes of i-fold extensions in ${\cal M}^{\rm a}_1\ox \Q$.

We are going to prove that ${\cal M}^{\rm a}_1\ox \Q$ is of cohomological dimension $\le 1$. We start with the following result.
\begin{lmm}
	Let $\gr^W_i\M_1^{\rm a}$ be the full sub-category of $\M_1^{\rm a}$ with objects 1-motives pure of weight $i$, $i=0,-1,-2$. Then $\gr^W_i\M_1^{\rm a}\ox \Q$  is an abelian thick sub-category of  $\M_1^{\rm a}\ox \Q$  and it is of cohomological dimension  $0$.
\end{lmm}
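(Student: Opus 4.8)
The plan is to identify, for each $i\in\{0,-1,-2\}$, the category $\gr^W_i\M_1^{\rm a}$ with a classical category of commutative $\k$-group schemes, to deduce that the corresponding isogeny category is \emph{semisimple} abelian, and then to check that being pure of weight $i$ is stable under subobjects, quotients and extensions inside $\M_1^{\rm a}\ox\Q$.

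First, unwinding $W_{-3}=0\subset W_{-2}=[0\to\boldsymbol L]\subset W_{-1}=[0\to\boldsymbol G]\subset W_0=M$ and using the formulas (\ref{eq:ker1mot})--(\ref{eq:coker1mot}) for kernels and cokernels, a $1$-motive pure of weight $-1$ is isomorphic to one of the form $[0\to\boldsymbol A]$ with $\boldsymbol A$ an abelian variety; pure of weight $-2$, to one of the form $[0\to\boldsymbol L]$ with $\boldsymbol L=\boldsymbol T\x\boldsymbol V$ a connected linear commutative $\k$-group; pure of weight $0$, to one of the form $[\boldsymbol F\to 0]$ with $\boldsymbol F$ a torsion-free formal group ($\Lie\boldsymbol F$ finitely generated). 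Thus $\gr^W_{-1}\M_1^{\rm a}$, $\gr^W_{-2}\M_1^{\rm a}$ and $\gr^W_{0}\M_1^{\rm a}$ are, respectively, the category of abelian varieties over $\k$, of connected commutative linear $\k$-groups, and of torsion-free formal $\k$-groups.

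Next I would pass to isogenies. For $i=-1$ one obtains the isogeny category of abelian varieties over $\k$, which is semisimple abelian by Poincar\'e complete reducibility. For $i=-2$, the decomposition $\boldsymbol L=\boldsymbol T\x\boldsymbol V$ is functorial ($\Hom(\boldsymbol T,\boldsymbol V)=\Hom(\boldsymbol V,\boldsymbol T)=0$), so the category is the product of $\k$-tori and vector $\k$-groups; up to isogeny the former is equivalent, via the character lattice, to the category of finite-dimensional $\Q$-vector spaces with continuous $\Gal(\kk|\k)$-action factoring through a finite quotient, semisimple by Maschke's theorem, and the latter is equivalent, via $\boldsymbol V\mapsto\Lie\boldsymbol V$ (here $\End_\k(\G_a)=\k$ since $\mathrm{char}\,\k=0$), to finite-dimensional $\k$-vector spaces, which is semisimple. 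For $i=0$, the canonical decomposition $\boldsymbol F=\boldsymbol F^o\x\boldsymbol F_{\rm et}$ is likewise functorial and exhibits $\gr^W_{0}\M_1^{\rm a}\ox\Q$ as the product of the isogeny category of torsion-free \'etale formal groups --- equivalent, via $\boldsymbol F_{\rm et}\mapsto\boldsymbol F_{\rm et}(\kk)\ox\Q$, to finite-dimensional continuous $\Q$-representations of $\Gal(\kk|\k)$, semisimple as above --- with the isogeny category of connected formal groups, which by the formal logarithm and $\End_\k(\widehat{\G}_a)=\k$ is equivalent to finite-dimensional $\k$-vector spaces. In every case $\gr^W_i\M_1^{\rm a}\ox\Q$ is semisimple abelian: every short exact sequence splits, so $\Ext^n=0$ for $n\ge1$, i.e. it has cohomological dimension $0$.

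It remains to check thickness inside $\M_1^{\rm a}\ox\Q$, i.e. closure under subobjects, quotients and extensions. The cleanest route is to prove that the weight filtration is strict on morphisms of $\M_1^{\rm a}\ox\Q$; then each $\gr^W_j$ is an exact endofunctor, and the subcategory $\gr^W_i\M_1^{\rm a}\ox\Q$, cut out by the vanishing of $\gr^W_j$ for $j\ne i$, is automatically a thick --- hence abelian --- subcategory. Strictness itself is extracted from the kernel/cokernel formulas (\ref{eq:ker1mot})--(\ref{eq:coker1mot}); the delicate point, which I expect to be the main obstacle, is to rule out a nonzero connected formal part $\boldsymbol F^o\iso\widehat{\G}_a^n$ in a $1$-motive $M$ sitting in a short exact sequence whose outer terms are pure of weight $i$: one checks from the shape $\Ker(f,g)=[u^*\Ker(g)^o\to\Ker(g)^o]$ of the kernel (and the dual shape of the cokernel) that such an $\boldsymbol F^o$ must die up to isogeny, essentially because a nonzero connected formal group is isogenous neither to an abelian variety, nor to a linear group, nor to an isogeny quotient of $0$. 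Granting this, together with the previous paragraph, $\gr^W_i\M_1^{\rm a}\ox\Q$ is an abelian thick subcategory of $\M_1^{\rm a}\ox\Q$ of cohomological dimension $0$.
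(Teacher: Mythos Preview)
Your proposal is correct and essentially matches the paper's proof: both identify $\gr^W_i\M_1^{\rm a}\ox\Q$ with a classical isogeny category, invoke Poincar\'e complete reducibility for $i=-1$ and Maschke together with the semisimplicity of ${\sf Mod}_\k$ for $i=0$, and check that an extension of pure objects remains pure. The only cosmetic differences are that the paper reduces the case $i=-2$ to $i=0$ via Cartier duality (whereas you argue it directly through character lattices and $\Lie$), and that the paper dispatches the thickness claim in a single line (``$E$ is also of weight $0$; this follows directly from the definitions'') while you spell it out more carefully via strictness of the weight filtration.
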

\begin{proof}
	First we consider the case $M=\boldsymbol{F}[1],M'=\boldsymbol{F}'[1]$ pure of weight $0$ (i.e. formal groups). Let $0\to \boldsymbol{F}'[1]\to E\to \boldsymbol{F}[1]\to 0$ an exact sequence of 1-motives modulo isogenies. Then $E$ is also of weight $0$ (this follows directly from the definitions). Hence $\Ext^1_\Q(\boldsymbol{F}[1],\boldsymbol{F}'[1])$ is isomorphic to the group of classes of extensions in the category of formal groups over $\k$ modulo isogenies. We know that of $\k$-vector spaces ${\sf Mod}_\k$ is semi-simple, and so is the $\Q$-linearized category of free $\Gal(\kk/\k)$-modules, ${\sf Mod}^{\rm free}_{\Gal(\kk/\k)}\ox \Q$, by the lemma of  Maschke (See \cite[p. 47]{serre:linrep}, for the representations of finite groups; the case of pro-finite is a direct consequence). Hence the category of formal groups up to isogeny, equivalent to ${\sf Mod}_\k\x {\sf Mod}^{\rm free}_{\Gal(\kk/\k)}\ox \Q$, is of cohomological dimension $0$.
	
	The second case is that of abelian varieties (weight $-1$). Again using the definitions  we get that $\Ext_\Q^1(\boldsymbol{A}',\boldsymbol{A})$ correspond to the group of extensions in the category of abelian varieties modulo isogenies. This group is zero (See \cite[p. 173]{mumford:abvar}).
	
	The third case is that of linear groups (weight $-2$). This can be reduced to the first case by Cartier duality (See \cite[\S 5]{laumon}) or proved explicitly.
\end{proof}
\begin{lmm}
	Let  $M,M'$ be pure Laumon 1-motives  with weights $w<w'$. Then  $\Ext_\Q^2(M,M')=0$.
\end{lmm}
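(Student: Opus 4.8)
The plan is to reduce the vanishing of $\Ext^2_\Q(M,M')$ for pure 1-motives of weights $w<w'$ to the analysis of three concrete cases, according to which of the weights $0,-1,-2$ appear, and in each case to exhibit any 2-extension as a product of two 1-extensions one of which is forced to vanish. Since the previous lemma shows each $\gr^W_i\M_1^{\rm a}\ox\Q$ has cohomological dimension $0$, the only possibly nonzero $\Ext^1$-groups between pure objects are those between objects of \emph{different} weights. So the first step is to observe that a 2-fold extension $0\to M'\to E_1\to E_2\to M\to 0$ splices (as recalled before Lemma~\ref{lmm:ext}) into $\gamma=\gamma_1\cdot\gamma_2$ with $\gamma_1\in\Ext^1_\Q(E,M')$, $\gamma_2\in\Ext^1_\Q(M,E)$, where $E=\Ker(E_2\to M)$. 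The middle object $E$ need not be pure, but it is filtered by its weights; the point is to choose the splicing so that the weight pieces of $E$ that can interact nontrivially with $M$ (on one side) and with $M'$ (on the other) are disjoint.

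Concretely, I would argue as follows. Let $E=W_\bullet E$ be the weight filtration on $E$ (inherited as a sub-1-motive filtration). Because $M$ is pure of weight $w$ and $M'$ is pure of weight $w'>w$, the map $E_2\to M$ factors through $\gr^W_w$ in a suitable sense and the inclusion $M'\hookrightarrow E_1$ lands in $W_{w'}$; pushing $\gamma_1$ forward and $\gamma_2$ backward along the appropriate weight quotients, one gets that $\gamma_1$ is classified by an extension of a weight-$\le w$ object by $M'$ (pure of weight $w'>w$), while $\gamma_2$ is classified by an extension of $M$ (pure of weight $w$) by a weight-$\ge w'$ object. Now I invoke the elementary fact (valid in $\M_1^{\rm a}\ox\Q$, using the description of kernels and cokernels from \eqref{eq:ker1mot}--\eqref{eq:coker1mot}) that $\Ext^1_\Q(N,N')=0$ whenever $N$ is pure of weight $w$ and $N'$ is pure of weight $w'$ with $w\le w'$: an extension with sub of higher-or-equal weight must split because a splitting can be built weight piece by weight piece, and the relevant $\Hom$ and $\Ext^1$ obstructions among the graded pieces vanish by the previous lemma together with the rigidity of the three types (formal groups, abelian varieties, tori/vector groups) up to isogeny. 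Since at least one of $\gamma_1,\gamma_2$ is of this ``wrong direction'' type (weight of the sub $\ge$ weight of the quotient), it vanishes, hence $\gamma=\gamma_1\cdot\gamma_2=0$.

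The main obstacle, and the step I would spend the most care on, is making the weight bookkeeping for the middle object $E$ precise: showing that after splicing one genuinely obtains $\gamma_1$ supported on weights $>w$ for the sub and $\le$ something for the quotient, and similarly for $\gamma_2$, so that one of the two factors is an extension in a ``split direction.'' This requires knowing that the weight filtration on 1-motives up to isogeny is strictly functorial (so that $W_\bullet E$ is well-behaved and $\gr^W$ is exact on $\M_1^{\rm a}\ox\Q$), which in turn rests on the structure of kernels and cokernels recalled in the remark after the first proposition. Once strictness of $W_\bullet$ is in hand, the reduction is essentially the same device as in Lemma~\ref{lmm:ext}: a three-step filtration $W_{w}\subset$ (relevant middle) $\subset E$ gives a splicing in which the outer 2-extension is the Yoneda product of two 1-extensions and the product is zero because one factor lives in a group that vanishes by the pure-weight analysis. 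I would also double-check the three numerical cases $(w,w')\in\{(-1,0),(-2,0),(-2,-1)\}$ individually, using Cartier duality to trade the weight-$-2$ (linear) case for the weight-$0$ (formal) case exactly as in the proof of the previous lemma.
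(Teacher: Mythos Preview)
Your overall strategy coincides with the paper's: splice a 2-extension $\gamma=\gamma_1\cdot\gamma_2$ with middle term $E$, use the weight filtration of $E$ to replace $E$ by something pure, and then kill one of the factors using the vanishing of $\Ext^1_\Q$ between pure objects when the sub has weight $\ge$ the quotient. The key auxiliary input you isolate---$\Ext^1_\Q(N,N')=0$ for $N,N'$ pure with $w(N)\le w(N')$---is exactly what the paper uses (implicitly) at each step.

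Where your write-up goes wrong is the weight bookkeeping in the middle paragraph. You assert that after pushing and pulling one arranges $\gamma_1\in\Ext^1_\Q(E',M')$ with $E'$ of weight $\le w$ \emph{and} $\gamma_2\in\Ext^1_\Q(M,E'')$ with $E''$ of weight $\ge w'$; but for the Yoneda product to still compute $\gamma$ one needs $E'=E''$, and no nonzero object has weight simultaneously $\le w$ and $\ge w'$ when $w<w'$. The paper's fix is more modest and concrete: since $w<w'\le 0$, the object $M$ has weight $-1$ or $-2$, so $\Ext^1_\Q(M,\gr_0E)=0$ and one may lift $\gamma_2$ to $\Ext^1_\Q(M,W_{-1}E)$; dually, since $w'>-2$ one has $\Ext^1_\Q(W_{-2}E,M')=0$, so $\gamma_1$ lifts through $\gr_{-1}E$. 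This reduces to $E$ \emph{pure of weight $-1$}, after which both $\gamma_1$ and $\gamma_2$ lie in $\Ext^1_\Q$-groups between pure objects with sub-weight $\ge$ quotient-weight, hence vanish. Note also that Lemma~\ref{lmm:ext} is not invoked here; it is reserved for the harder case $w>w'$ (specifically $(w,w')=(0,-2)$) in the next lemma, so your plan to appeal to it and to Cartier duality is unnecessary for the present statement.
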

\begin{proof}
	Fix  a 2-fold extension $\gamma\in \Ext_\Q^2(M,M')$ represented by
		\[
			0\to M'\to E_1\to E_2\to M\to 0
		\]
	and take $\gamma_2\in \Ext_\Q^1(M,E)$, $\gamma_1\in \Ext_\Q^1(E,M')$ (as in \eqref{eq:2ext}, \eqref{eq:spliced}) such that $\gamma=\gamma_1\cdot \gamma_2$. 
	
	We have an exact sequence
\[
	 \Ext_\Q^1(M,W_{-1}E)\to\Ext_\Q^1(M,E)\to \Ext_\Q^1(M,\gr_0 E)
\]
 By assumption  $-2\le w<w'\le 0$, then $M$  is pure of weight $-1$ or $-2$.
 In this case we get easily $\Ext_\Q^1(M,\gr_0 E)=0$ and we can lift $\gamma_2$ to $\gamma_2'\in \Ext_\Q^1(M,W_{-1}E)$. Then let $\gamma_1'$ be the image of $\gamma_1$ via $\Ext_\Q^1(E,M')\to \Ext_\Q^1(W_{-1}E,M')$. Now using
\[
	\Ext_\Q^1(\gr_{-1}E,M')\to\Ext_\Q^1(W_{-1}E,M')\to \Ext_\Q^1(W_{-2} E,M')
\]
we can reduce to consider $E$ pure of weight $-1$, in fact $\Ext_\Q^1(W_{-2} E,M')=0$ because $w'>-2$. From this follows that $\gamma_1=\gamma_2=0$.
\end{proof}
\begin{lmm}
		Let  $M,M'$ be pure Laumon 1-motives  with weights $w>w'$. Then  $\Ext_\Q^2(M,M')=0$.
\end{lmm}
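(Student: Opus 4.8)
The plan is to follow the proof of the previous lemma: write $\gamma$ as a Yoneda product of two 1-extensions, use the weight filtration of the ``middle'' term to reduce to the case in which that term is pure of weight $-1$, and then conclude — except that one of the three possible pairs of weights, namely $(0,-2)$, needs an extra argument. Besides the vanishing of $\Ext^1_\Q$ between pure 1-motives of the same weight (first lemma of \S\ref{sec:ext1isomot}), I will repeatedly use that $\Ext^1_\Q(P,Q)=0$ whenever $P,Q$ are pure and the weight of $P$ is strictly smaller than that of $Q$: an extension of $P$ by $Q$ would exhibit the sub-object $Q$ with weight higher than the quotient $P$, which by strictness of the weight filtration on $\M_1^{\rm a}\otimes\Q$ forces it to split.

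Since $M,M'$ are pure, $(w,w')\in\{(0,-1),(0,-2),(-1,-2)\}$, so $w\in\{0,-1\}$ and $w'\in\{-1,-2\}$. Fix $\gamma\in\Ext^2_\Q(M,M')$, represent it by \eqref{eq:2ext}, put $E=\Ker(E_2\to M)=\Coker(M'\to E_1)$ and factor $\gamma=\gamma_1\cdot\gamma_2$ with $\gamma_2\in\Ext^1_\Q(M,E)$, $\gamma_1\in\Ext^1_\Q(E,M')$ as in \eqref{eq:spliced}. From $0\to W_{-1}E\to E\to\gr_0^W E\to 0$ and $\Ext^1_\Q(M,\gr_0^W E)=0$ (same weight if $w=0$, lower-into-higher if $w=-1$) I lift $\gamma_2$ to $\gamma_2'\in\Ext^1_\Q(M,W_{-1}E)$; replacing $\gamma_1$ by its restriction to $W_{-1}E$, functoriality of the Yoneda product gives $\gamma=\gamma_1'\cdot\gamma_2'$. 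Likewise, from $0\to W_{-2}E\to W_{-1}E\to\gr_{-1}^W E\to 0$ and $\Ext^1_\Q(W_{-2}E,M')=0$ (same weight if $w'=-2$, lower-into-higher if $w'=-1$) I obtain $\gamma=\tilde\gamma_1\cdot\tilde\gamma_2$ with $\tilde\gamma_1\in\Ext^1_\Q(\boldsymbol{A},M')$ and $\tilde\gamma_2\in\Ext^1_\Q(M,\boldsymbol{A})$, where $\boldsymbol{A}:=\gr_{-1}^W E$ is pure of weight $-1$, i.e.\ an abelian variety up to isogeny. If $w'=-1$ then $\tilde\gamma_1=0$, and if $w=-1$ then $\tilde\gamma_2=0$ (same weight in both cases); hence $\gamma=0$ unless $(w,w')=(0,-2)$.

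For $(0,-2)$, write $M=[\boldsymbol{F}\to 0]$ and $M'=[0\to\boldsymbol{L}]$. After adjusting by isogenies, $\tilde\gamma_1$ is represented by $[0\to\boldsymbol{G}]$ with $0\to\boldsymbol{L}\to\boldsymbol{G}\to\boldsymbol{A}\to 0$ an extension of algebraic groups, and $\tilde\gamma_2$ by a 1-motive $[v:\boldsymbol{F}\to\boldsymbol{A}]$; splicing, $\gamma$ is represented by
\[
0\to[0\to\boldsymbol{L}]\to[0\to\boldsymbol{G}]\to[v:\boldsymbol{F}\to\boldsymbol{A}]\to[\boldsymbol{F}\to 0]\to 0 .
\]
The crucial observation is that if $v$ lifts to a morphism $w:\boldsymbol{F}\to\boldsymbol{G}$ in $\sf Ab_\k$, then $\Omega:=[w:\boldsymbol{F}\to\boldsymbol{G}]$ has weight filtration $W_{-2}\Omega=[0\to\boldsymbol{L}]\subset W_{-1}\Omega=[0\to\boldsymbol{G}]\subset\Omega$, with $\Omega/W_{-2}\Omega=[v:\boldsymbol{F}\to\boldsymbol{A}]$ and $\Omega/W_{-1}\Omega=[\boldsymbol{F}\to 0]$, so the displayed sequence is exactly the 2-extension attached to $W_{-2}\Omega\subset W_{-1}\Omega\subset\Omega$; by Lemma~\ref{lmm:ext} it vanishes, and so $\gamma=0$.

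Thus the proof reduces to lifting $v$ along $\boldsymbol{G}\to\boldsymbol{A}$ up to isogeny (multiplying $\gamma$ by a positive integer is harmless in $\Ext^2_\Q$), and this is where the real work lies. Using $\boldsymbol{F}=\boldsymbol{F}^o\times\boldsymbol{F}_{\rm et}$: on the connected part $\boldsymbol{F}^o$ — a product of copies of $\widehat{\G}_a$ in characteristic $0$ — one has $\Hom_{\sf Ab_\k}(\boldsymbol{F}^o,\boldsymbol{H})\cong\Hom_\k(\Lie\boldsymbol{F}^o,\Lie\boldsymbol{H})$ naturally in the connected commutative algebraic group $\boldsymbol{H}$ (the map factors through the formal completion of $\boldsymbol{H}$), and $\Lie$ is exact, so $v|_{\boldsymbol{F}^o}$ lifts; on the \'etale, finitely generated, torsion-free part $\boldsymbol{F}_{\rm et}$ (constant $\cong\Z^r$ over $\kk$) one lifts first over $\kk$, where $\boldsymbol{G}(\kk)\twoheadrightarrow\boldsymbol{A}(\kk)$, and the obstruction to descending to $\k$ lies in $\h^1(\Gal(\kk/\k),\boldsymbol{L}(\kk))^{r}$, a torsion group (writing $\boldsymbol{L}=\boldsymbol{T}\times\boldsymbol{V}$: the $\h^1$ of the vector part vanishes and the $\h^1$ of the torus is killed by the degree of a splitting field), so it dies after multiplying $v$ by a suitable integer. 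The only genuine obstacle is this last step: the weight d\'evissage is routine, but $\Ext^2_\Q([\boldsymbol{F}\to 0],[0\to\boldsymbol{L}])=0$ really does require handling a non-split extension $\boldsymbol{G}$ (mixing torus, vector and abelian parts) together with the Galois descent of the lift over $\kk$.
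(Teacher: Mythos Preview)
Your argument is correct and follows essentially the same route as the paper: split $\gamma=\gamma_1\cdot\gamma_2$, use the weight filtration on $E$ to reduce the middle term to an abelian variety, dispose of the cases $(0,-1)$ and $(-1,-2)$ by the vanishing of $\Ext^1_\Q$ between same-weight pures, and for $(0,-2)$ lift $v:\boldsymbol{F}\to\boldsymbol{A}$ through $\boldsymbol{G}$ so that Lemma~\ref{lmm:ext} applies. The only cosmetic differences are in the lifting step: the paper handles $\boldsymbol{F}_{\rm et}$ via the torsion of $\Ext^1_{\sf Ab_\k}(\boldsymbol{F}_{\rm et},\boldsymbol{L})$ and $\boldsymbol{F}^o$ via formal completion, whereas you phrase the same facts as torsion of Galois $\h^1$ and exactness of $\Lie$ --- equivalent in characteristic~$0$ (note only that when $\boldsymbol{F}_{\rm et}$ has nontrivial Galois action the obstruction lives in $\h^1\bigl(\Gal,\Hom_\Z(\boldsymbol{F}_{\rm et}(\kk),\boldsymbol{L}(\kk))\bigr)$ rather than $\h^1(\Gal,\boldsymbol{L}(\kk))^r$, but this is still torsion).
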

\begin{proof}
	As in the previous proof fix  a 2-fold extension $\gamma\in \Ext_\Q^2(M,M')$ represented by
		\[
			0\to M'\to E_1\to E_2\to M\to 0
		\]
	and take $\gamma_2\in \Ext_\Q^1(M,E)$, $\gamma_1\in \Ext_\Q^1(E,M')$ (as in \ref{eq:2ext}, \ref{eq:spliced}) such that $\gamma=\gamma_1\cdot \gamma_2$.
	
	We have to consider three cases: $(a)$ $ M=\boldsymbol{F}[1]$, $M'=\boldsymbol{A}[0]$; $(b)$ $M=\boldsymbol{F}[1]$, $M'=\boldsymbol{L}[0]$; $(c)$ $M=\boldsymbol{A}[0]$, $M'=\boldsymbol{L}[0]$. Where $\boldsymbol{F}$ is a formal group, $\boldsymbol{A}$ an abelian variety, $\boldsymbol{L}$ a linear group.

		Case $(a)$: now  $\gamma_1\in \Ext_\Q^1(E,\boldsymbol{A})$ $\gamma_2\in\Ext_\Q^1(\boldsymbol{F}[1],E)$. Then $E=[\boldsymbol{F}'\to \boldsymbol{A}']$ is such that $W_{-2}E=0$. Consider the exact sequence
		\[
			0\to \gr_{-1} E\to E\to \gr_{0} E\to 0
		\]
		applying $\Hom_\Q(\boldsymbol{F}[1],-)$ to it we get
		\[
		\Ext_\Q^1(\boldsymbol{F}[1],\gr_{-1}E)\to	\Ext_\Q^1(\boldsymbol{F}[1],E)\to \Ext_\Q^1(\boldsymbol{F}[1],\gr_{0} E)
		\]
		We proved that $\Ext_\Q^1(\boldsymbol{F}[1],\gr_{0} E)=0$ so we can lift $\gamma_2$ to a class $\gamma_2'\in \Ext_\Q^1(\boldsymbol{F}[1],\gr_{-1}E)$ (This lifting is not canonical).
		Similarly using  $\Hom_\Q(-,\boldsymbol{A})$ to it we get an exact sequence
		\[
			\Ext_\Q^1(\gr_{0}E,\boldsymbol{A})\to	\Ext_\Q^1(E,\boldsymbol{A})\to \Ext_\Q^1(\gr_{-1} E,\boldsymbol{A})
		\]
		and we can map $\gamma_1\mapsto \gamma_1'\in \Ext_\Q^1(\gr_{-1} E, \boldsymbol{A})$. By standard arguments it holds $\gamma_1'\cdot\gamma_2'=\gamma_1\cdot\gamma_2=\gamma$. Recalling that $\Ext_\Q^1(\gr_{-1} E,\boldsymbol{A})=0$ we get the result.

		Case $(c)$: Is similar to case $(a)$.

		Case $(b)$: now $\gamma\in \Ext_\Q^2(\boldsymbol{F}[1],\boldsymbol{L})$. We want to reduce to the hypothesis of  lemma \ref{lmm:ext}. Thus we have to show: we can take $E$ pure of weight $1$ (i.e. an abelian variety);  there exists a 1-motive $N$ such that $\gamma_1\in \Ext_\Q^1(E,\boldsymbol{L})$ is represented by 	$0\to W_{-2} N\to W_{-1} N\to \gr_{-1} N\to 0$;  $\gamma_2\in \Ext_\Q^1(\boldsymbol{F}[1],E)$ is represented by 	$0\to \gr_{-1} N\to W_{0} N/W_{-2} \to \gr_{0} N\to 0$.\\
		We know that $\Ext_\Q^1(\boldsymbol{F}[1],\gr_{0} E)=0$, so like in case $(a)$ we can lift $\gamma_2$ to a class $\gamma_2'\in \Ext_\Q^1(\boldsymbol{F}[1],W_{-1} E)$. Let $\gamma_1'$ the image of $\gamma_1$ via $\Ext_\Q^1(E,\boldsymbol{L})\to \Ext_\Q^1(W_{-1}E,\boldsymbol{L})$. Hence $\gamma_1'\cdot \gamma_2'=\gamma_1\cdot \gamma_2$. \\
		Now we can suppose $E$ of weight $\le - 1$. Using the same argument we can lift $\gamma_1'$ to $\gamma_1''\in \Ext_\Q^1(\gr_{-1}E,\boldsymbol{L})$ (because $\Ext_\Q^1(\gr_{-2}E,\boldsymbol{L})=0$) and send $\gamma_2'\mapsto \gamma_2''\in \Ext_\Q^1(\boldsymbol{F}[1],\gr_{-1}E)$.
		We proved  that there exists an abelian variety $\boldsymbol{A}$, $\gamma_1\in \Ext_\Q^1(\boldsymbol{A},\boldsymbol{A})$, $\gamma_2\in \Ext_\Q^1(\boldsymbol{F}[1],\boldsymbol{A})$, such that $\gamma_1\cdot \gamma_2=\gamma$. We claim that $\gamma_1,\gamma_2$ can be represented by extensions in the category Laumon-1-motives. In fact let 
		\[
			\gamma_1:\quad 0 \to \boldsymbol{L} \stackrel{f\ox n\inv}{\longrightarrow}\boldsymbol{G} \stackrel{g\ox m\inv}{\longrightarrow} \boldsymbol{A}\to 0
		\]
		be an extension in the category of 1-motives modulo isogenies: $f,g$ are morphism of algebraic groups, $n,m\in \Z$. Then consider the push-forward by $n\inv$ and the pull-back by $m\inv$, we get the following commutative diagram with exact rows in ${\cal M}^{\rm a,fr}_1\ox \Q$
		\begin{equation*}
		\xymatrix{
		0\ar[r]&\boldsymbol{L}\ar[d]^{n\inv} \ar[r]^{f/n}&\boldsymbol{G}\ar[d]^\id\ar[r]^{g/m}&\boldsymbol{A}\ar[d]^\id\ar[r]&0\\ 
		0\ar[r]&\boldsymbol{L} \ar[r]^{f}&\boldsymbol{G}\ar[r]^{g/m}&\boldsymbol{A}\ar[r]&0\\
		0\ar[r]&\boldsymbol{L}\ar[u]_{\id} \ar[r]^{f}&\boldsymbol{G}\ar[u]_\id\ar[r]^{g}&\boldsymbol{A}\ar[u]_{m\inv}\ar[r]&0 }
		\end{equation*}

	The exactness of the last row is equivalent to the following: $\Ker f$ is finite; let $(\Ker g)^0$ be the connected component of $\Ker g$, then $\Im f\to (\Ker g)^0$ is surjective with finite kernel $K$; $g$ is surjective. So after replacing $\boldsymbol{L},\boldsymbol{A}$ with isogenous groups we have an exact sequence in ${\cal M}^{\rm a,fr}_1$
	\[
		0\to \boldsymbol{L}\to \boldsymbol{G}\to \boldsymbol{A}\to 0
	\]
	Explicitly
	\begin{equation*}
	\xymatrix{
	0\ar[r]&\boldsymbol{L}\ar[d] \ar[r]^{f}&\boldsymbol{G}\ar[d]^\id\ar[r]^{g}&\boldsymbol{A}\ar[d]^\id\ar[r]&0\\ 
	0\ar[r]&\boldsymbol{L}/\Ker f \ar[d]\ar[r]&\boldsymbol{G}\ar[d]\ar[r]^{g}&\boldsymbol{A}\ar[d]\ar[r]&0\\
	0\ar[r]&\Im f/K \ar[r]&\boldsymbol{G}\ar[r]^{g}&\boldsymbol{A}\ar[r]&0\\
	0\ar[r]&\Im f/K\ar[u]_{\id} \ar[r]&\boldsymbol{G}'\ar[u]\ar[r]&\boldsymbol{G}/(\Ker g)^0\ar[u]\ar[r]&0 }
	\end{equation*}

	With similar arguments we can prove that $\gamma_2$ is represented by an extension in the category ${\cal M}^{\rm a,fr}_1$
	\[
		0\to \boldsymbol{A}\to N\to \boldsymbol{F}[1]\to 0
	\]
	with $N=[u:\boldsymbol{F}\to \boldsymbol{A}]$.

	To apply  lemma \ref{lmm:ext} we need to prove that there is lifting $u':\boldsymbol{F}\to \boldsymbol{G}$. First suppose $\boldsymbol{F}=\boldsymbol{F}_{\rm et}$: consider the long exact sequence
	\[
		\Hom_{\sf Ab_\k}(\boldsymbol{F},\boldsymbol{G})\to \Hom_{\sf Ab_\k}(\boldsymbol{F},\boldsymbol{A})	\stackrel{\partial}{\rightarrow} \Ext^1_{\sf Ab_\k}(\boldsymbol{F},\boldsymbol{L})
	\]
	We know (\cite{milne:etcoho}) that $\Ext^1_{\sf Ab_\k}(\boldsymbol{F},\boldsymbol{L})$ is a torsion group. So modulo replacing $\boldsymbol{F}$ with an isogenous lattice we get $\partial u=0$ and the lift exists.\\
	In case $\boldsymbol{F}=\boldsymbol{F}^o$ is a connected formal group we have a commutative diagram in $\sf Ab_\k$
	\begin{equation*}
	\xymatrix{
	 & & & \ar[dl]\boldsymbol{F}\ar[dd]^u\\
	\widehat{\boldsymbol{G}}\ar[dr]\ar[rr]^{\widehat{\pi}}& &\widehat{\boldsymbol{A}} \ar[dr]\\
	& \boldsymbol{G}\ar[rr]^\pi& & \boldsymbol{A}}
	\end{equation*} 
	where $\widehat{?}$ is the formal completion at the origin of $?=\boldsymbol{G},\boldsymbol{A}$. 
	The formal completion is an exact functor so $\widehat{\pi}$ is an epimorphism. The category of formal groups is of cohomological dimension 0, then we can choose a section of $\widehat{\pi}$ and lift $u$.
\end{proof}
\begin{thr}
	The category ${\cal M}^{\rm a}_1\ox \Q$ is of cohomological dimension 1. 
\end{thr}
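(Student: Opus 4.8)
The plan is to show that $\Ext^n_\Q(M,M')=0$ for every $n\ge 2$ and all Laumon $1$-motives $M,M'$; the bound is sharp because $\M^{\rm a}_1\ox\Q$ is not semisimple — e.g. the $1$-motive $[\Z\to\G_m]$ with non-torsion structure map is not isomorphic in $\M^{\rm a}_1\ox\Q$ to $[\Z\to 0]\oplus[0\to\G_m]$, so $\Ext^1_\Q([\Z\to 0],[0\to\G_m])\ne 0$ already between pure objects of weights $0$ and $-2$. First I would reduce to $n=2$: given an $n$-fold extension $0\to M'\to E_1\to\cdots\to E_n\to M\to 0$ with $n\ge 3$, put $Z=\Im(E_2\to E_3)$; the evident generalization of the splicing recalled before Lemma~\ref{lmm:ext} writes its class as the Yoneda product of a class in $\Ext^2_\Q(Z,M')$ and a class in $\Ext^{n-2}_\Q(M,Z)$, so that $\Ext^2_\Q(Z,M')=0$ (for all arguments) forces the class to vanish. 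Hence it suffices to prove $\Ext^2_\Q(M,M')=0$ for arbitrary $M,M'$.

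Next comes a dévissage along the weight filtration. For any $1$-motive $N$ the filtration $W_\bullet N$ gives short exact sequences in $\M^{\rm a}_1\ox\Q$ whose graded pieces are the pure $1$-motives $\gr^W_iN$, $i=0,-1,-2$; inserting these into the long exact $\Ext_\Q$-sequences in each variable reduces $\Ext^2_\Q(M,M')=0$ to the case where $M$ and $M'$ are both pure. If their weights differ, this is exactly the content of the two lemmas above treating pure $1$-motives of distinct weights. The equal-weight case is the one that remains, and for it I will also use: $\Ext^1_\Q(M,N)=0$ whenever $M$ is pure of weight $i$ and every weight of $N$ is $>i$, and $\Ext^1_\Q(N,M')=0$ whenever $M'$ is pure of weight $i$ and every weight of $N$ is $<i$. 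Both follow by dévissage from the elementary vanishing of $\Ext^1_\Q$ between pure $1$-motives with the source of strictly smaller weight than the target (the same input used freely in the two lemmas above).

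So let $M,M'$ be pure of weight $i$ and $\gamma\in\Ext^2_\Q(M,M')$. Splice as before Lemma~\ref{lmm:ext}: $\gamma=\gamma_1\cdot\gamma_2$ with $\gamma_2\in\Ext^1_\Q(M,E)$, $\gamma_1\in\Ext^1_\Q(E,M')$, $E$ the middle object of the $2$-extension. Every weight of $E/W_iE$ exceeds $i$, so $\Ext^1_\Q(M,E/W_iE)=0$ and $\gamma_2$ lifts along $j\colon W_iE\hookrightarrow E$ to $\gamma_2'\in\Ext^1_\Q(M,W_iE)$; by the functoriality $\alpha\cdot j_*\beta=(j^*\alpha)\cdot\beta$ of Yoneda products we get $\gamma=\gamma_1'\cdot\gamma_2'$ with $\gamma_1'=j^*\gamma_1\in\Ext^1_\Q(W_iE,M')$. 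Every weight of $W_{i-1}E$ is $<i$, so $\Ext^1_\Q(W_{i-1}E,M')=0$ and $\gamma_1'$ lies in the image of $q^*$ for $q\colon W_iE\twoheadrightarrow\gr^W_iE$, say $\gamma_1'=q^*\gamma_1''$ with $\gamma_1''\in\Ext^1_\Q(\gr^W_iE,M')$; setting $\gamma_2''=q_*\gamma_2'\in\Ext^1_\Q(M,\gr^W_iE)$ and applying the same identity gives $\gamma=(q^*\gamma_1'')\cdot\gamma_2'=\gamma_1''\cdot\gamma_2''$. Now $\gr^W_iE$, $M$ and $M'$ all lie in $\gr^W_i\M_1^{\rm a}\ox\Q$, which is of cohomological dimension $0$, so $\gamma_1''=0$ and hence $\gamma=0$.

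The hard part is precisely this equal-weight case. The earlier lemmas dispose of distinct weights, but when $w=w'$ the middle objects of a $2$-extension of pure weight-$i$ objects need not be pure, so one cannot argue directly inside $\gr^W_i\M_1^{\rm a}\ox\Q$; the device is to use the weight filtration of the spliced object $E$ to absorb the parts of $\gamma_1$ and $\gamma_2$ coming from the "wrong" weights, after which cohomological dimension $0$ of the pure subcategory does the rest. The only routine points left are that the weight filtration is compatible with isogenies (so $W_\bullet$ makes sense in $\M^{\rm a}_1\ox\Q$) and the handful of elementary $\Ext^1_\Q$-vanishings between pure objects invoked above.
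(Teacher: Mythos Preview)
Your proof is correct and follows the same overall strategy as the paper: reduce $\Ext^2_\Q(M,M')$ to the case of pure $M,M'$ by d\'evissage along the weight filtration, then treat the pure cases. For $w\ne w'$ you invoke exactly the two lemmas the paper proves, so there is no difference there.

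Where you go further than the paper is the equal-weight case. The paper's proof simply says ``using the previous three lemmas we conclude'', relying on the first lemma (that $\gr^W_i\M_1^{\rm a}\ox\Q$ is a thick abelian subcategory of cohomological dimension $0$). Strictly speaking, that lemma only yields $\Ext^1_\Q(M,M')=0$ for $M,M'$ pure of the same weight (thickness identifies ambient $\Ext^1$ with the subcategory's $\Ext^1$); it does not by itself give $\Ext^2_\Q(M,M')=0$, since the middle terms of a $2$-extension need not be pure. Your splicing-and-filtering argument---lifting $\gamma_2$ through $W_iE\hookrightarrow E$ using $\Ext^1_\Q(M,E/W_iE)=0$, then pushing $\gamma_1$ down to $\gr^W_iE$ using $\Ext^1_\Q(W_{i-1}E,M')=0$, and finally killing the resulting product with $\Ext^1_\Q(\gr^W_iE,M')=0$---is precisely the routine step the paper leaves implicit. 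The auxiliary vanishings $\Ext^1_\Q(M,N)=0$ for weights of $N$ strictly above that of $M$ (and dually) are the same ``easy'' vanishings the paper itself uses inside the proofs of the two $w\ne w'$ lemmas, so you are not importing anything new.

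You also make explicit the reduction from $\Ext^n_\Q$ ($n\ge 3$) to $\Ext^2_\Q$ via Yoneda splicing, which the paper tacitly assumes. In short: same approach, with the equal-weight step and the $n\ge 3$ reduction spelled out rather than left to the reader.
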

\begin{proof}
	First note that we  can restrict to consider  pure motives $M,M'$ (a 1-motive is pure if it is isomorphic to one of its graded pieces w.r.t. the weight filtration). In fact given $M,M'$ 1-motives, not necessarily pure, we have the canonical exact sequences given by the weight filtration
		\[
			0\to W_{-1} M'\to M'\to \gr_{0}^W M'\to 0\]
			\[ 0\to W_{-2} M'\to W_{-1}M'\to \gr_{-1}^W M'\to 0
		\]
	Hence	applying $\Hom_\Q(M,-)$ we get two long exact sequences
		\[
			\cdots \Ext_\Q^2(M,W_{-1}M')\to \Ext_\Q^2(M,M')\to\Ext_\Q^2(M,\gr_{0}^W M')\cdots 
			\]
			\[ \cdots \Ext_\Q^2(M, W_{-2} M')\to \Ext_\Q^2(M,W_{-1}M')\to \Ext_\Q^2(M,\gr_{-1}^W M')\cdots
		\]
		from this follows that we can reduce to prove $\Ext_\Q^2(M,M')=0$ for $M'$ pure. In the same way we reduce to consider $M$ pure.
	
	Then using the previous three lemmas we conclude.
\end{proof}
\addcontentsline{toc}{section}{References}

\begin{thebibliography}{10}

\bibitem{sga3.1}
{\em Sch\'emas en groupes. {I}: {P}ropri\'et\'es g\'en\'erales des sch\'emas en
  groupes}.
\newblock S\'eminaire de G\'eom\'etrie Alg\'ebrique du Bois Marie 1962/64 (SGA
  3). Dirig\'e par M. Demazure et A. Grothendieck. Lecture Notes in
  Mathematics, Vol. 151. Springer-Verlag, Berlin, 1970.

\bibitem{bv.bertapelle:sharpderham}
Luca~Barbieri-Viale and Alessandra~Bertapelle.
\newblock Sharp de {R}ham realization.
\newblock {\tt arXiv:math/0607115v1}, 2006.

\bibitem{bv:fht}
Luca Barbieri-Viale.
\newblock Formal {H}odge theory.
\newblock {\em Math. Res. Lett.}, 14(3):385--394, 2007.

\bibitem{bv-kahn:D1mot1}
Luca Barbieri-Viale and Bruno Kahn.
\newblock On the derived category of 1-motives, {I}.
\newblock {\tt arXiv:0706.1498v1}, 2007.

\bibitem{beilinson:abshodge}
A.~A. Be{\u\i}linson.
\newblock Notes on absolute {H}odge cohomology.
\newblock In {\em Applications of algebraic {$K$}-theory to algebraic geometry
  and number theory, {P}art {I}, {II} ({B}oulder, {C}olo., 1983)}, volume~55 of
  {\em Contemp. Math.}, pages 35--68. Amer. Math. Soc., Providence, RI, 1986.

\bibitem{deligne:hodge3}
Pierre Deligne.
\newblock Th\'eorie de {H}odge. {III}.
\newblock {\em Inst. Hautes \'Etudes Sci. Publ. Math.}, (44):5--77, 1974.

\bibitem{MR0344261}
Michel Demazure.
\newblock {\em Lectures on {$p$}-divisible groups}.
\newblock Lecture Notes in Mathematics, Vol. 302. Springer-Verlag, Berlin,
  1972.

\bibitem{fgaexplained}
Barbara Fantechi, Lothar G{\"o}ttsche, Luc Illusie, Steven~L. Kleiman, Nitin
  Nitsure, and Angelo Vistoli.
\newblock {\em Fundamental algebraic geometry}, volume 123 of {\em Mathematical
  Surveys and Monographs}.
\newblock American Mathematical Society, Providence, RI, 2005.
\newblock Grothendieck's FGA explained.

\bibitem{gelfand-manin}
Sergei~I. Gelfand and Yuri~I. Manin.
\newblock {\em Methods of homological algebra}.
\newblock Springer Monographs in Mathematics. Springer-Verlag, Berlin, second
  edition, 2003.

\bibitem{iversen}
Birger Iversen.
\newblock {\em Cohomology of sheaves}.
\newblock Universitext. Springer-Verlag, Berlin, 1986.

\bibitem{laumon}
Gerard Laumon.
\newblock Transformation de {F}ourier generalisee.
\newblock {\tt arXiv:alg-geom/9603004v1}, 1996.

\bibitem{milne:etcoho}
James~S. Milne.
\newblock {\em \'{E}tale {C}ohomology}, volume~33 of {\em Princeton
  Mathematical Series}.
\newblock Princeton University Press, Princeton, N.J., 1980.

\bibitem{mumford:abvar}
David Mumford.
\newblock {\em Abelian varieties}.
\newblock Tata Institute of Fundamental Research Studies in Mathematics, No. 5.
  Published for the Tata Institute of Fundamental Research, Bombay, 1970.

\bibitem{orgogozo:isomotif}
Fabrice Orgogozo.
\newblock Isomotifs de dimension inf\'erieure ou \'egale \`a un.
\newblock {\em Manuscripta Math.}, 115(3):339--360, 2004.

\bibitem{serre:linrep}
Jean-Pierre Serre.
\newblock {\em Linear representations of finite groups}.
\newblock Springer-Verlag, New York, 1977.
\newblock Translated from the second French edition by Leonard L. Scott,
  Graduate Texts in Mathematics, Vol. 42.

\end{thebibliography}
%

\end{document}